\theoremstyle{plain}
\newtheorem{theorem}{Theorem}
\newtheorem{lemma}[theorem]{Lemma}
\newtheorem{corollary}[theorem]{Corollary}
\newtheorem*{mainthm}{Main Theorem}
\newcommand{\ithcenter}[2]{\mathbf{Z}_{#1}(#2)}
\newcommand{\centerof}[1]{\mathbf{Z}(#1)}
\begin{document}
	
\title[Commuting Graphs]{Extending Morgan and Parker's results about commuting graphs}

\author[Beike]{Nicolas F. \ Beike}
\address{Department of Mathematics and Statistics, 501 Lincoln Building, Youngstown State University, Youngstown, OH 44555}
\email{nfbeike@student.ysu.edu}

\author[Carleton]{Rachel Carleton}
\address{Department of Mathematical Sciences, Kent State University, Kent, OH 44242}
\email{rcarlet3@kent.edu}

\author[Costanzo]{David G.\ Costanzo}
\address{School of Mathematical and Statistical Sciences, O-110 Martin Hall, Box 340975, Clemson University, Clemson, SC 29634}
\email{dcostan2@kent.edu}

\author[Heath]{Colin Heath}
\address{Department of Mathematics, 3620 S. Vermont Ave., KAP 104, University of Southern California, Los Angeles, CA 90089}
\email{colinjhe@usc.edu}

\author[Lewis]{Mark L.\ Lewis}
\address{Department of Mathematical Sciences, Kent State University, Kent, OH 44242}
\email{lewis@math.kent.edu}

\author[Lu]{Kaiwen Lu}
\address{Department of Mathematics, 2074 East Hall, 530 Church Street, University of Michigan, Ann Arbor, MI 48109}
\email{kailu@umich.edu}

\author[Pearce]{Jamie D. Pearce}
\address{Department of Mathematics, University of Texas at Austin, 2515 Speedway, PMA 8.100, Austin, TX 78712}
\email{jamie.pearce@utexas.edu}

\keywords{Commuting Graphs, $A$-groups, Frobenius groups}
\subjclass{Primary: 20D40 Secondary: 05C25}

\begin{abstract}
Morgan and Parker have proved that if $G$ is a group satisfying the condition that $\centerof{G} = 1$, then the connected components of the commuting graph of $G$ have diameter at most $10$. Parker has proved that if in addition $G$ is solvable, then the commuting graph of $G$ is disconnected if and only if $G$ is a Frobenius group or a $2$-Frobenius group, and if the commuting graph of $G$ is connected, then its diameter is at most $8$.  We prove that the hypothesis $Z (G) = 1$ in these results can be replaced with $G' \cap \centerof{G} = 1$.  We also prove that if $G$ is solvable and $G/\centerof{G}$ is either a Frobenius group or a $2$-Frobenius group, then the commuting graph of $G$ is disconnected. 	
\end{abstract}

\maketitle
	
\section{Introduction}
	
In this paper, all groups are finite.  Given a group $G$, the commuting graph of $G$, denoted by $\Gamma(G)$, is defined as the graph whose vertex set is the noncentral elements of $G$ and two vertices are adjacent if and only if they commute in $G$.  Giudici and Pope in \cite{GiuPo} state that commuting graphs were first studied by Brauer and Fowler in \cite{BrFo} in relation to the classification of simple groups.  They go on to say that commuting graphs were first studied in their own right by Segev and Seitz in \cite{SeSe} in terms of the classical simple groups.  In fact, much of the research regarding the commuting graph is related to simple groups.  We are not going to try describe all of this research, but note that it culminates in the work of Solomon and Woldar in \cite{SoWo} were they complete the final step in proving that if $S$ is a simple group and $X$ is any group where $\Gamma (S) \cong \Gamma (X)$, then $S \cong X$.

In the course of this research, Iranmanesh and Jafarzedah conjecture in \cite{IraJaf} that there is a universal bound on the diameter of commuting graphs.   Giudici and Pope in \cite{GiuPo} study the diameters of commuting graphs for a number of families of group.  Surprisingly, Giudici and Parker produced in \cite{GiuPark} a family of $2$-groups of nilpotence class $2$ where there is no bound on the diameter of the commuting graphs.  On the other hand, in the seminal 2013 paper \cite{parkerSoluble}, Parker proves that the commuting graph of a solvable group $G$ with a trivial center is disconnected if and only if $G$ is a Frobenius or $2$-Frobenius group.  We will state the definition of $2$-Frobenius groups in Section 2.  In addition, when the commuting graph of $G$ is connected, he proves that it has diameter at most $8$.  He also provides an example of a solvable group where this bound is met. Parker and Morgan remove the solvability hypothesis on $G$ in \cite{parkerTrivialCenter} by proving for the commuting graph of any group with a trivial center that all the connected components have  diameter at most $10$.      

In this paper, we look to extend Parker's and Parker and Morgan's results.  In particular, we would like to extend the class of groups where we can determine which commuting graphs are disconnected and determine further classes of groups where we can show that diameters of the commuting graph are bounded by the constants $8$ and $10$.  In particular, we show that we can replace the hypothesis that $\centerof{G} = 1$ with the hypothesis that $G' \cap \centerof{G} = 1$.  

\begin{mainthm}
Let $G$ be a group and suppose that $G' \cap \centerof{G} = 1$.  
\begin{enumerate}
\item $\Gamma (G)$ is connected if and only if $\Gamma (G/\centerof{G})$ is connected.
\item Every connected component of $\Gamma (G)$ has diameter at most $10$.
\item If $G$ is solvable and $\Gamma (G)$ is connected, then $\Gamma (G)$ has diameter at most $8$.
\item If $G$ is solvable, then $\Gamma (G)$ is disconnected if and only if $G/Z$ is either a Frobenius group or a $2$-Frobenius group.
\end{enumerate}
\end{mainthm}

In fact, we will see that we can relax the hypothesis that $G' \cap \centerof{G} = 1$ even further.  In particular, we show that it suffices to assume that $C (G) \cap \centerof{G} = \{ 1 \}$ where $C (G) = \{ [x,y] \mid  x,y \in G \}$.  I.e., we only need the set of commutators, not the whole commutator subgroup.  (We note that we use $1$ to denote the trivial subgroup.  Since $C(G)$ is not a subgroup, it is not appropriate to view $C(G) \cap \centerof{G}$ as a subgroup, so we use $\{ 1 \}$ to denote the set consisting only of the identity.)  We will present examples of groups $G$ where $C (G) \cap \centerof{G} = \{ 1 \}$, but $G' \cap Z (G) > 1$, so this replacement does actually improve the result.

Following the literature, we say that a group $G$ is an {\it $A$-group} if every Sylow subgroup of $G$ is abelian.  We show that if $G$ is an $A$-group then $G' \cap \centerof{G} = 1$.  In particular, this shows that the results of the Main Theorem apply to $A$-groups; however, we would not be surprised if one could show that the diameter bounds can be lowered for $A$-groups, especially solvable $A$-groups, but we have not investigated this at this time.

One other consequence of $G' \cap \centerof{G} = 1$ is that $\centerof{G/\centerof{G}} = 1$, and so it makes sense to ask whether we can determine which groups satisfying $\centerof{G/\centerof{G}} = 1$ have a disconnected commuting graph and if we can bound the diameter of the commuting graphs of groups where the commuting graph is connected.  At this point, we can find one class of groups where the commuting graph is disconnected as seen in the following theorem, but we will present examples of other groups satisfying this condition where the commuting graph is disconnected.  

\begin{theorem}
If $G$ is a group where $G/\centerof{G}$ is either a Frobenius or a $2$-Frobenius group, then $\Gamma (G)$ is disconnected.
\end{theorem}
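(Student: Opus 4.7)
The plan is to prove the contrapositive: if $\Gamma(G)$ is connected then $\Gamma(H)$ is connected, where $H = G/\centerof{G}$. A direct check shows that Frobenius and $2$-Frobenius groups have trivial center, so the hypothesis gives $\centerof{H} = 1$, and consequently every nonidentity element of $H$ is a vertex of $\Gamma(H)$.

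The first step is a lifting argument. Writing $\pi \colon G \to H$ for the quotient map, suppose $x_0, x_1, \ldots, x_n$ is a walk in $\Gamma(G)$. Each $x_i$ is noncentral in $G$, so $\pi(x_i) \neq 1$, and since $\centerof{H} = 1$ each $\pi(x_i)$ is a vertex of $\Gamma(H)$. The relation $[x_i, x_{i+1}] = 1$ descends to $[\pi(x_i), \pi(x_{i+1})] = 1$, so after deleting any consecutive repetitions one obtains a walk in $\Gamma(H)$ from $\pi(x_0)$ to $\pi(x_n)$. Thus any two elements of $\Gamma(H)$ in distinct components must lift to elements of $\Gamma(G)$ in distinct components, and conversely a connected $\Gamma(G)$ forces a connected $\Gamma(H)$.

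The second step is to verify directly that $\Gamma(H)$ is disconnected. If $H$ is Frobenius with kernel $N$ and complement $M$, then $\centralizer{H}{n} \leq N$ for every $1 \neq n \in N$, so the component of $n$ is trapped in $N \setminus \{1\}$ and is separated from the elements of any complement. If $H$ is $2$-Frobenius with chain $1 < L < K < H$, the Frobenius property of $H/L$ yields $\centralizer{H}{k} \leq K$ for every $k \in K \setminus L$, and then the Frobenius property of $K$ confines $\centralizer{H}{k}$ to the unique Frobenius complement $M$ of $L$ in $K$ that contains $k$. Iterating this observation along a walk shows that the entire component of $k$ in $\Gamma(H)$ lies inside $M \setminus \{1\}$, which is disjoint from $L \setminus \{1\}$, so any element of $L^{*}$ witnesses disconnectedness.

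I expect the main obstacle to be the $2$-Frobenius case. Parker's classification is stated only for solvable groups, whereas the theorem here places no solvability condition on $G$, and $H$ can in principle fail to be solvable (a Frobenius complement can be nonsolvable, and this complement appears as $H/K$ in the $2$-Frobenius setup). Hence one cannot simply invoke Parker's result for $H$ but must give the centralizer-tracking argument sketched above, carefully using both Frobenius structures on $H$ and on $K$. Once the disconnectedness of $\Gamma(H)$ is established, the lifting step closes out the proof.
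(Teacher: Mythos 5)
Your proof is correct, and its first half is essentially the paper's argument: the paper observes that a Frobenius or $2$-Frobenius quotient has trivial center, so $\ithcenter{2}{G} = \centerof{G}$, and then applies the contrapositive of its Lemma 2(1), whose proof is exactly your ``project a walk and delete repetitions'' step. Where you genuinely diverge is in the second half: the paper simply cites Parker for the disconnectedness of $\Gamma(G/\centerof{G})$, while you prove it directly by tracking centralizers ($\centralizer{H}{n} \le N$ for nonidentity kernel elements in the Frobenius case, and the two-step confinement $\centralizer{H}{k} \le K$ and then into the unique complement containing $k$ in the $2$-Frobenius case). Your instinct that the bare citation is slightly delicate is well founded: Parker's equivalence is stated for solvable groups with trivial center, and a Frobenius complement (hence a Frobenius quotient $G/\centerof{G}$) need not be solvable; a $2$-Frobenius group is automatically solvable, but the Frobenius case genuinely falls outside the literal statement being cited. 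The direction needed is, however, the elementary one, and the paper itself sketches your argument later (it notes that $\Gamma$ of a Frobenius group has $1+|N|$ components and $\Gamma$ of a $2$-Frobenius group has $1+|K|$ components, citing \cite{CoLe} for the latter). So your version is self-contained and marginally more robust, at the cost of redoing a standard computation; the paper's version is shorter but leans on an external result whose hypotheses do not quite cover every case as stated.
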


This research was conducted during a summer REU in 2020 at Kent State University with the funding of NSF Grant DMS-1653002.
We thank the NSF and Professor Soprunova for their support.
	

\section{Results}	
	
Let $G$ be a finite group and $\Gamma(G)$ the commuting graph of $G$.  When $x = y$ or $x$ and $y$ are adjacent in $\Gamma(G)$, we write $x \sim y$.  In particular, writing $x \sim y$ emphasizes that $x,y \in G \setminus \centerof{G}$.  Viewing $x,y$ as vertices of $\Gamma (G)$, we use $d(x,y)$ to denote the distance between $x$ and $y$. (I.e., the number of edges in the shortest path between $x$ and $y$.) For the remainder of the paper, we set $Z = \centerof{G}$, the center of the group, and let $Z_2 = \ithcenter{2}{G}$, the preimage of $\centerof{G/\centerof{G}}$. We also set $C = C(G)$.  Note that $G'$ is the group generated by $C$, but $C$ itself is not always a group.

	



The following lemma addresses the relationship between adjacent elements in $\Gamma(G)$ and adjacent elements in $\Gamma(G/Z)$.

\begin{lemma} \label{three}
Let $G$ be a group and fix $x, y\in G \setminus Z_2$.  If $x \sim y$ in $\Gamma(G)$, then $xZ \sim yZ$ in $\Gamma(G/Z)$.  If $C \cap Z = \{ 1 \} $ and $xZ \sim yZ$ in $\Gamma(G/Z)$, then $x \sim y$ in $\Gamma(G)$.
\end{lemma}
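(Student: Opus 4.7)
The plan is to treat the two implications in parallel, since both come down to the commutator identity $[xZ,yZ] = [x,y]Z$ in the quotient $G/Z$ and to a careful bookkeeping of when $x,y$ (respectively $xZ,yZ$) qualify as vertices of the appropriate commuting graph. I would first record two preliminary facts that will be used throughout: (i) $Z \subseteq Z_2$, since the preimage of $Z(G/Z)$ contains the kernel $Z$; and (ii) $x \notin Z_2$ is equivalent to $xZ \notin Z(G/Z)$, by the very definition of $Z_2$. Observation (ii) lets us simultaneously guarantee noncentrality of $x$ in $G$ (since $x \notin Z_2$ and $Z \subseteq Z_2$) and noncentrality of $xZ$ in $G/Z$.

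For the forward direction, I would take $x \sim y$ in $\Gamma(G)$ and split on the two cases built into the paper's convention: either $x = y$, in which case $xZ = yZ$ and there is nothing to check beyond the vertex condition; or $xy = yx$, in which case $(xZ)(yZ) = xyZ = yxZ = (yZ)(xZ)$. The vertex condition $xZ, yZ \notin Z(G/Z)$ follows immediately from $x,y \notin Z_2$ via observation (ii), so $xZ \sim yZ$ in $\Gamma(G/Z)$.

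For the converse, assume $C \cap Z = \{1\}$ and $xZ \sim yZ$ in $\Gamma(G/Z)$. Noncentrality of $x$ and $y$ in $G$ again follows from $x, y \notin Z_2$ together with $Z \subseteq Z_2$, so $x,y$ are legitimate vertices of $\Gamma(G)$. The content of the statement is then to show $xy = yx$ (or $x = y$). If $xZ = yZ$, then $y = xz$ for some $z \in Z$, and $xy = yx$ follows directly from $z$ being central. Otherwise $(xZ)(yZ) = (yZ)(xZ)$, which says $[x,y] \in Z$; but by definition $[x,y] \in C$, so the hypothesis $C \cap Z = \{1\}$ forces $[x,y] = 1$, i.e.\ $xy = yx$. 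Either way we conclude $x \sim y$ in $\Gamma(G)$.

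I do not expect any serious obstacle here: the lemma is a clean translation between $G$ and $G/Z$, and the only real input beyond the commutator identity is the choice to work in $G \setminus Z_2$ rather than $G \setminus Z$, which is exactly what makes observation (ii) available on both sides at once. The hypothesis $C \cap Z = \{1\}$ (weaker than $G' \cap Z = 1$) enters only in the converse, and only to upgrade ``$[x,y] \in Z$'' to ``$[x,y] = 1$'', which is the single place the weakened hypothesis is doing genuine work.
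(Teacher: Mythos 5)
Your proposal is correct and follows essentially the same argument as the paper: pass commutativity to the quotient for the forward direction, and for the converse use that $[x,y]\in C\cap Z=\{1\}$ to upgrade commutativity modulo $Z$ to genuine commutativity. The extra bookkeeping you include (the $x=y$ and $xZ=yZ$ cases, and the verification that $x,y\notin Z_2$ guarantees the vertex conditions on both sides) is sound but is left implicit in the paper's proof.
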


\begin{proof}
Suppose that $x \sim y$ in $\Gamma(G)$.  Then, we have that $xy=yx$, and hence, $xZ yZ = yZ xZ$ in $G/Z$.  Thus, $xZ \sim yZ$. 
	
Now, suppose $C \cap Z = \{ 1 \}$ and $xZ \sim yZ$.  This implies that $[x,y]\in Z$.  Also, $[x,y] \in C$, and since $C \cap Z = \{ 1 \}$, we obtain  $[x,y] = 1$.  Therefore, $xy = yx$ and so $x \sim y$.
\end{proof}

We now consider the relationship between $\Gamma (G)$ and $\Gamma (G/Z)$ when $Z_2 = Z$ and in particular, when $C \cap Z = \{ 1 \}$.

\begin{lemma} \label {four}
Let $G$ be a group and suppose that $Z_2  = Z$.  
\begin{enumerate}
\item If $\Gamma (G)$ is connected, then $\Gamma (G/Z)$ is connected and the diameter of $\Gamma (G/Z)$ is less than or equal to the diameter of $\Gamma (G)$.  
\item If $C \cap Z = \{ 1 \}$ and $\Gamma (G/Z)$ is connected, then $\Gamma (G)$ is connected and has the same diameter as $\Gamma (G/Z)$.
\item If $C \cap Z = \{ 1 \}$ and $\Gamma (G/Z)$ is disconnected, then there is a one-to-one correspondence between the connected components of $\Gamma (G/Z)$ and of $\Gamma (G)$ that preserves diameter except in the case that a connected component of $\Gamma (G/Z)$ consists of a single coset and $Z > 1$; in this case, the corresponding component of $\Gamma (G)$ will have diameter $1$.
\end{enumerate}
\end{lemma}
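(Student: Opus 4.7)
The plan is to use Lemma~\ref{three} as the bridge between $\Gamma(G)$ and $\Gamma(G/Z)$. Since the hypothesis $Z_2 = Z$ forces $\centerof{G/Z} = 1$, the vertex set of $\Gamma(G/Z)$ is precisely $\{xZ : x \in G \setminus Z\}$, so the projection $G \to G/Z$ restricts to a surjection of vertex sets from $\Gamma(G)$ onto $\Gamma(G/Z)$. I will use this correspondence throughout without further comment.

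For part (1), take non-central $x, y \in G$ and a shortest path $x = x_0 \sim x_1 \sim \cdots \sim x_n = y$ in $\Gamma(G)$. The first half of Lemma~\ref{three} yields $x_i Z \sim x_{i+1} Z$ in $\Gamma(G/Z)$ (with $\sim$ admitting equality), so $xZ$ and $yZ$ are joined by a walk of length at most $n = d(x,y)$. This gives connectivity of $\Gamma(G/Z)$ and the diameter inequality. For part (2), assume additionally $C \cap Z = \{1\}$ and that $\Gamma(G/Z)$ is connected. Given non-central $x, y \in G$, let $xZ = g_0 Z \sim \cdots \sim g_n Z = yZ$ be a shortest path in $\Gamma(G/Z)$, choosing the representatives so that $g_0 = x$ and $g_n = y$. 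The second half of Lemma~\ref{three} then gives $g_i \sim g_{i+1}$ in $\Gamma(G)$, hence $d(x,y) \leq n = d(xZ, yZ)$; combined with part (1) this is an equality, so $\Gamma(G)$ is connected with the same diameter.

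For part (3), assume $C \cap Z = \{1\}$ and that $\Gamma(G/Z)$ is disconnected. For each connected component $\mathcal{C}$ of $\Gamma(G/Z)$, set $\widetilde{\mathcal{C}} = \bigcup_{xZ \in \mathcal{C}} xZ \subseteq G \setminus Z$. Three observations establish that $\mathcal{C} \mapsto \widetilde{\mathcal{C}}$ is a bijection between the components of the two graphs. First, any coset $xZ$ with $x \notin Z$ induces a clique in $\Gamma(G)$, since $(xz_1)(xz_2) = (xz_2)(xz_1)$ for all $z_1, z_2 \in Z$. Second, the path-lifting argument of part (2) applies verbatim inside each component, so elements belonging to different cosets of the same $\mathcal{C}$ are connected in $\Gamma(G)$; together with the clique observation, $\widetilde{\mathcal{C}}$ is a connected subgraph. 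Third, if $x \in \widetilde{\mathcal{C}}$ and $y \in \widetilde{\mathcal{D}}$ for distinct components, any path from $x$ to $y$ in $\Gamma(G)$ would project via the first half of Lemma~\ref{three} to a walk joining $\mathcal{C}$ to $\mathcal{D}$ in $\Gamma(G/Z)$, a contradiction. Hence the $\widetilde{\mathcal{C}}$ are exactly the components of $\Gamma(G)$.

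For the diameter statement inside the bijection: when $\mathcal{C}$ contains at least two cosets, the arguments of parts (1) and (2) applied inside $\widetilde{\mathcal{C}}$ give $\mathrm{diam}\, \widetilde{\mathcal{C}} = \mathrm{diam}\, \mathcal{C}$. The exceptional case is a singleton component $\mathcal{C} = \{gZ\}$: here $\widetilde{\mathcal{C}} = gZ$ is a clique of cardinality $|Z|$ by the first observation, so its diameter is $0$ if $|Z|=1$ (matching $\mathcal{C}$) and $1$ if $|Z|>1$, which is exactly the stated exception. The only real subtlety is this singleton case, where a nontrivial center inflates an isolated vertex of $\Gamma(G/Z)$ into a nontrivial clique in $\Gamma(G)$; once one has isolated that exception, the remaining work is a bookkeeping exercise built entirely on top of Lemma~\ref{three}.
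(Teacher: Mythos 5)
Your proposal is correct and takes essentially the same approach as the paper: project paths through the first half of Lemma~\ref{three} for part (1), lift them through the second half for parts (2) and (3), and observe that each noncentral coset of $Z$ forms a clique in $\Gamma(G)$ to account for the singleton-component exception.
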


\begin{proof}
Suppose $x, y \in G \setminus Z$.  Since $\Gamma (G)$ is connected, we can find $x = x_0, x_1, \dots, x_n = y \in G \setminus Z$ so that $x_i \sim x_{i+1}$ for $i = 0, \dots, n -1$.  By Lemma \ref{three}, we have $x_i Z \sim x_{i+1}Z$.  It follows that $xZ$ and $yZ$ are connected by a path of length of $n$.  It follows that $d(xZ,yZ) \le d(x,y)$.  We deduce that $\Gamma (G/Z)$ is connected and its diameter is at most the diameter of $\Gamma (G)$.

We now add the assumption that $C \cap Z = \{ 1 \}$.  From Lemma \ref{three}, we see that $x \sim y$ in $\Gamma (G)$ if and only if $xZ \sim yZ$ in $\Gamma (G/Z)$.  It follows that $x$ and $y$ lie in the same connected component of $\Gamma (G)$ if and only if $xZ$ and $yZ$ lie in the same connected component of $\Gamma (G/Z)$.  In the case where they are in the same connected component, then $d (x,y) = d(xZ, yZ)$; except when $xZ = yZ$ and $x \ne y$, in this case $d(x,y) = 1$ and $d (xZ,yZ) = 0$.  Notice that if $xZ \ne yZ$, then $x \ne y$, so a connected component of $\Gamma (G/Z)$ with more than one element will correspond to a connected component of $\Gamma (G)$ with more than one element and they will have the same diameter.  Finally, if $\{ xZ \}$ is a connected component of $\Gamma (G/Z)$, then $\{ xz \mid z \in Z\}$ will be the corresponding connected component of $\Gamma (G)$, so the component in $\Gamma (G/Z)$ has diameter $0$, but the corresponding component in $\Gamma (G)$ will have diameter $1$.
\end{proof}

Following the literature, a group $G$ is a {\it Frobenius group} if it contains a proper, nontrivial subgroup $H$ so that $H \cap H^g = 1$ for all $g \in G \setminus H$.  The subgroup $H$ is called a {\it Frobenius complement} for $H$.  Frobenius proved that $N = (G \setminus \cup_{g \in G} H^g) \cup \{ 1 \}$ is a normal Hall subgroup of $G$ that satisfies $G = HN$ and $H \cap N = 1$; the subgroup $N$ is called the {\it Frobenius kernel} of $G$.  We say $G$ is a {\it $2$-Frobenius group} if there exist normal subgroups $K \le L$ so that $L$ and $G/K$ are Frobenius groups with Frobenius kernels $K$ and $L/K$ respectively.

We now show that when $G/Z$ is either a Frobenius group or a $2$-Frobenius group, then $\Gamma (G)$ is disconnected.

\begin{corollary}
If $G$ is a group so that $G/Z$ is either a Frobenius group or a $2$-Frobenius group, then $\Gamma (G)$ is disconnected.
\end{corollary}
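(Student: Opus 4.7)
The plan is to reduce the problem to $\Gamma(G/Z)$ and lift disconnection back. The technical subtlety is that Lemma~\ref{four}(3) would be the natural tool but it requires $C\cap Z=\{1\}$, which is not granted here; however, the forward direction of Lemma~\ref{three} needs no such hypothesis, only that the vertices lie outside $Z_2$, and that is all I would actually need.

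First I would verify that $Z_2 = Z$, i.e.\ $\centerof{G/Z} = 1$. Both Frobenius and $2$-Frobenius groups have trivial center. If $G/Z = NH$ is Frobenius, a nontrivial central element would lie in $N$ or in a conjugate of $H$, and the standard bound $\centralizer{G/Z}{n} \le N$ for $n \in N\setminus\{1\}$ (respectively $\centralizer{G/Z}{h} \le H^g$ for $h \in H^g\setminus\{1\}$) forces a contradiction with centrality. If $G/Z$ is $2$-Frobenius with $K \le L$, then applying the Frobenius case to $(G/Z)/K$ yields $\centerof{G/Z} \le K \le L$, and applying it to $L$ yields $\centerof{G/Z} \le \centerof{L} = 1$.

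Next I would argue that $\Gamma(G/Z)$ is disconnected. For $G/Z$ Frobenius this is a one-line centralizer argument: since $\centralizer{G/Z}{n} \le N$ for every $n \in N\setminus\{1\}$, no edge of $\Gamma(G/Z)$ can join a vertex in $N\setminus\{1\}$ to a vertex in $(G/Z)\setminus N$, and both of these sets are nonempty. For $G/Z$ a $2$-Frobenius group, $G/Z$ is solvable with trivial center, so $\Gamma(G/Z)$ is disconnected by the theorem of Parker quoted in the introduction.

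To finish, I would apply the forward direction of Lemma~\ref{three}: since $Z_2 = Z$, every pair of adjacent vertices $x,y$ in $\Gamma(G)$ satisfies $x,y \in G\setminus Z_2$, so $xZ \sim yZ$ in $\Gamma(G/Z)$ (possibly with $xZ = yZ$). Hence any path in $\Gamma(G)$ projects to a walk in $\Gamma(G/Z)$, so vertices of $\Gamma(G)$ whose images lie in distinct components of $\Gamma(G/Z)$ must themselves lie in distinct components of $\Gamma(G)$. Picking $\bar u,\bar v$ in different components of $\Gamma(G/Z)$ and arbitrary preimages $u,v \in G\setminus Z$ produces two vertices of $\Gamma(G)$ with no connecting path. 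The only substantive input is the $2$-Frobenius case of the previous paragraph, which is imported from Parker rather than reproved here; everything else is formal bookkeeping via Lemma~\ref{three}.
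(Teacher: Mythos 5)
Your proposal is correct and follows essentially the same route as the paper: establish $Z_2 = Z$ from the triviality of the center of a Frobenius or $2$-Frobenius group, note that $\Gamma(G/Z)$ is disconnected (the paper cites Parker for both cases, where you handle the Frobenius case directly), and lift disconnectedness back to $\Gamma(G)$. The paper packages the last step as the contrapositive of Lemma~\ref{four}(1), whereas you unfold it back to the forward direction of Lemma~\ref{three}; these are the same argument, since that is exactly how Lemma~\ref{four}(1) is proved.
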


\begin{proof}
We know that the center of a Frobenius group or a $2$-Frobenius group is trivial, so $Z_2 (G) = Z$.  Now, Parker has shown that $\Gamma (G/Z)$ is disconnected.  We apply the contrapositive of Lemma \ref{four}(1) to see that $\Gamma (G)$ is disconnected.
\end{proof}

We next show that the condition that $C \cap Z = \{ 1 \}$ implies that $Z = Z_2$.
For $x \in G$, set $D_G (x) = \{ d \in G \mid [d, x] \in Z \}$. Thus, $D_G(x)/Z = C_{G/Z} (Zx)$.

\begin{lemma}\label{five} 
If $G$ is a group such that $C \cap Z = \{ 1 \}$, then $C_G(x) = D_G(x)$ for each
$x \in G$. In particular, $Z = Z_2$.
\end{lemma}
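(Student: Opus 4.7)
The plan is to prove the two containments separately and then derive $Z = Z_2$ as an immediate corollary.

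First I would observe that the inclusion $C_G(x) \subseteq D_G(x)$ is automatic: if $d$ commutes with $x$, then $[d,x] = 1 \in Z$, so $d \in D_G(x)$. This direction requires no hypothesis.

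For the reverse inclusion $D_G(x) \subseteq C_G(x)$, the hypothesis $C \cap Z = \{1\}$ is exactly what is needed. Given $d \in D_G(x)$, by definition $[d,x] \in Z$. On the other hand, $[d,x]$ is a commutator, so $[d,x] \in C$. Therefore $[d,x] \in C \cap Z = \{1\}$, forcing $[d,x] = 1$ and hence $d \in C_G(x)$. This is the only real content of the lemma, and there is no serious obstacle — it is essentially a one-line application of the hypothesis to the commutator $[d,x]$.

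Finally, for the statement $Z = Z_2$, I would unpack the definition of $Z_2$. An element $x$ lies in $Z_2$ exactly when $xZ \in \centerof{G/Z}$, i.e., $[g,x] \in Z$ for every $g \in G$, which says $D_G(x) = G$. By the equality $C_G(x) = D_G(x)$ just established, this is equivalent to $C_G(x) = G$, i.e., $x \in Z$. Thus $Z_2 \subseteq Z$, and the reverse inclusion $Z \subseteq Z_2$ is immediate from the definition. The main (and only) obstacle is really just recognizing that the hypothesis applies cleanly to commutators of the form $[d,x]$; once that is in hand, both conclusions drop out from definitions.
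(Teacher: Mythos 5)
Your proposal is correct and follows essentially the same route as the paper's proof: the forward inclusion is trivial, the reverse inclusion applies the hypothesis to the commutator $[d,x] \in C \cap Z$, and $Z = Z_2$ then follows (the paper phrases this last step as $Z = \cap_{x \in G} C_G(x) = \cap_{x \in G} D_G(x) = Z_2$, which is just your element-wise argument in intersection form).
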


\begin{proof} 
Let $x \in G$. Of course, $C_G(x) \leq D_G(x)$. If $d \in D_G(x)$, then $[d,x] \in
C \cap Z = \{ 1 \}$ and so $d \in C_G(x)$. Thus, $D_G(x) \le C_G(x)$, and we conclude that
$C_G (x) = D_G (x)$. Finally, observe that $Z = \cap_{x \in G} C_G(x) = \cap_{x \in G} 
D_G(x) = Z_2$, as wanted.
\end{proof}

 
 
We now obtain Parker and Morgan's and Parker's result for $G$ when $C \cap Z = \{ 1 \}$.
 
\begin{corollary} \label{six}
Let $G$ be a group and suppose that $C \cap Z = \{ 1 \}$.  
\begin{enumerate}
\item $\Gamma (G)$ is connected if and only if $\Gamma (G/Z)$ is connected.
\item Every connected component of $\Gamma (G)$ has diameter at most $10$.
\item If $G$ is solvable and $\Gamma (G)$ is connected, then $\Gamma (G)$ has diameter at most $8$.
\item If $G$ is solvable, then $\Gamma (G)$ is disconnected if and only if $G/Z$ is either a Frobenius group or a $2$-Frobenius group.
\end{enumerate}	
\end{corollary}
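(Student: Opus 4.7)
The plan is to reduce everything to the corresponding known result for $G/Z$ by invoking the tools already established in the paper. By Lemma \ref{five}, the hypothesis $C \cap Z = \{1\}$ forces $Z_2 = Z$, which in turn means that the quotient $G/Z$ has trivial center. This is the crucial observation that lets us feed $G/Z$ into the theorems of Morgan--Parker and Parker cited in the introduction, while Lemma \ref{four} transfers connectivity and diameter information back and forth between $\Gamma(G)$ and $\Gamma(G/Z)$.

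For part (1), one direction uses Lemma \ref{four}(1) (which requires only $Z_2 = Z$) and the other direction uses Lemma \ref{four}(2) (which uses the full hypothesis $C \cap Z = \{1\}$). For part (2), I would argue that since $\centerof{G/Z} = 1$, the Morgan--Parker theorem gives every connected component of $\Gamma(G/Z)$ diameter at most $10$; then Lemma \ref{four}(2) and \ref{four}(3) transfer this bound component-by-component to $\Gamma(G)$. For part (3), if $G$ is solvable then so is $G/Z$, and Parker's solvable result gives the bound of $8$ on $\Gamma(G/Z)$ when it is connected; combined with part (1) and Lemma \ref{four}(2) this passes to $\Gamma(G)$. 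For part (4), the same passage via part (1) together with Parker's characterization of disconnectedness in the centerless solvable case produces the equivalence: $\Gamma(G)$ disconnected $\iff$ $\Gamma(G/Z)$ disconnected $\iff$ $G/Z$ is a Frobenius or $2$-Frobenius group.

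The only delicate point, and the part I would watch most carefully, is the exceptional case in Lemma \ref{four}(3) where a connected component of $\Gamma(G/Z)$ consists of a single coset $xZ$; the corresponding component of $\Gamma(G)$ then has diameter $1$ rather than $0$. For parts (2) and (3) this poses no real obstacle because $1$ is already below the target bounds of $10$ and $8$, and for part (4) it is irrelevant because a component of size $\geq 1$ does not affect the disconnectedness itself. So the only genuine work is bookkeeping: verifying that the hypotheses of each cited lemma are in place (in particular that $Z_2 = Z$ holds, via Lemma \ref{five}) and that each invocation of Parker's or Morgan--Parker's theorem is justified for $G/Z$. No new group-theoretic content is needed beyond what has already been proved.
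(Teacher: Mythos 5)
Your proposal is correct and follows essentially the same route as the paper: Lemma \ref{five} gives $Z_2 = Z$ (hence $\centerof{G/Z} = 1$), the Morgan--Parker and Parker theorems are applied to $G/Z$, and Lemma \ref{four} transfers the conclusions back to $\Gamma(G)$. Your extra attention to the single-coset exceptional case in Lemma \ref{four}(3) is a point the paper's proof passes over silently, but it changes nothing since diameter $1$ is below all the relevant bounds.
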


\begin{proof}
Conclusion (1) is an immediate consequence of Lemma \ref{four} (1) and (2) combined with Lemma \ref{five}.  Since $Z(G/Z) = 1$, Parker's results apply to $G/Z$, and applying Parker's results in $G/Z$ with Lemma \ref{four} (2) and (3) yields (2), (3), and (4).
\end{proof}


The following result has appeared in the literature. See, for example, Corollary 4.5 in \cite{Bro} or Theorem 4.1 in \cite{Tau }.

\begin{lemma} \label{agroup}
Let $G$ be an $A$-group.  Then $G' \cap Z =  1$.
\end{lemma}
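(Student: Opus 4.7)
The plan is to apply the transfer homomorphism into an (abelian) Sylow subgroup. Let $z \in G' \cap \centerof{G}$, and decompose $z$ into its primary components $z = z_{p_1} \cdots z_{p_k}$. Each $z_{p_i}$ is a power of $z$, so each $z_{p_i}$ lies in $G' \cap \centerof{G}$; hence it suffices to show that an element of prime-power order in $G' \cap \centerof{G}$ must be trivial.

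So I assume $z \in G' \cap \centerof{G}$ has $p$-power order and fix $P \in \syl{p}{G}$. By hypothesis $P$ is abelian. Since $z$ is a central $p$-element, $z$ lies in the (unique) Sylow $p$-subgroup of $\centerof{G}$, which is contained in every Sylow $p$-subgroup of $G$; in particular, $z \in P$. Now consider the transfer homomorphism $V \colon G \to P$, which is a genuine homomorphism (rather than one into $P/[P,P]$) precisely because $P$ is abelian.

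Two standard facts about $V$ finish the argument. First, since $P$ is abelian, $V$ factors through $G/G'$, so $V(G') = 1$ and in particular $V(z) = 1$. Second, the transfer-evaluation formula applied to an element of $\centerof{G} \cap P$ gives $V(z) = z^{[G:P]}$. Combining these yields $z^{[G:P]} = 1$, and since $[G:P]$ is coprime to the $p$-power order of $z$, we conclude that $z = 1$. There is no serious obstacle in this argument; the key insight is simply that the hypothesis of abelian Sylow subgroups is exactly what makes the transfer both land in $P$ itself and take its simplest evaluation on central elements.
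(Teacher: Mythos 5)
Your proof is correct. Note that the paper itself does not supply an argument for this lemma; it simply cites Corollary 4.5 of Broshi and Theorem 4.1 of Taunt. Your transfer argument is the standard proof of this fact (and is essentially what underlies those references): the reduction to primary components is valid since each primary component of $z$ is a power of $z$ and hence lies in the subgroup $G' \cap \centerof{G}$; the unique Sylow $p$-subgroup of $\centerof{G}$ is characteristic in $\centerof{G}$, hence normal in $G$, hence contained in every $P \in \syl{p}{G}$; the transfer $V \colon G \to P$ kills $G'$ because its target is abelian; and the transfer evaluation lemma gives $V(z) = z^{[G:P]}$ for central $z$, forcing $z = 1$ by coprimality. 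Every step is justified, so this fills in a complete proof where the paper only gives a citation.
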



Using Lemma \ref{agroup}, we see that if $G$ is an $A$-group, then $G$ satisfies the hypothesis of Corollary \ref{six}, and thus, $G$ satisfies the conclusions of Corollary \ref{six}.

We close by presenting some examples using the small groups library \cite{small} that can be accessed by the computer algebra systems GAP \cite{gap} or Magma \cite{magma} to illustrate various points.  We first present examples of groups $G$ where $C \cap Z = \{ 1 \}$ but $G' \cap Z > 1$.  Take $G$ to be one of SmallGroup(768,1083474), SmallGroup(768,1083475), or SmallGroup(768,1083476).  

We next present examples of groups $G$ where $\Gamma (G/Z)$ is connected but $\Gamma (G)$ is not connected.  Taking $G$ to be one of the Small Groups (72,22), (72,23), (120,11), (144,125), and (288,565) yield examples of groups $G$ with this property.  We actually have many more examples in this category, but we have just pulled a few examples at random from the list.  We include the graphs $\Gamma (G)$ and $\Gamma (G/Z)$ for $G = {\rm SmallGroup} (72,22)$ in Figures \ref{g7222} and \ref{g7222qc}.

\begin{figure}[h] 
	\centering
	
	\begin{minipage}{0.8\textwidth}
		\centering
		\includegraphics[width=0.7\linewidth]{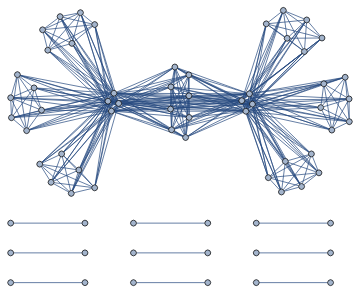}
		\caption{\texttt{$\Gamma (G)$ for $G = $ SmallGroup(72,22)}}
		\label{g7222}
	\end{minipage}
    \bigskip
	\begin{minipage}{0.8\textwidth}
		\centering
		\includegraphics[width=0.7\linewidth]{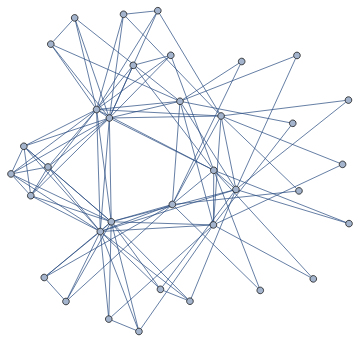}
		\caption{\texttt{$\Gamma (G/Z)$ for $G = $ SmallGroup(72,22)}}
		\label{g7222qc}
	\end{minipage}
\end{figure}

The groups that we have considered all satisfy the condition that $\centerof{G/Z} = 1$.  It makes sense to ask what can be said about groups that satisfy $\centerof{G/Z} = 1$ and do not satisfy $C \cap Z = \{ 1 \}$.  Notice that if $G$ is a such group, then there will definitely exist elements $x,y \in G \setminus Z$ so that $xZ \sim yZ$ in $\Gamma (G/Z)$ but $x \not\sim y$ in $\Gamma (G)$; so the results of Lemma \ref{four} (2) and (3) are not guaranteed to hold.  We will present some examples to show in fact conclusion (3) does not hold.  

First, we claim that it is not hard to see that if $G$ is a Frobenius group with Frobenius kernel $N$, then $\Gamma (G)$ has $1 + |N|$ connected components and if $G$ is a $2$-Frobenius group with normal subgroups $K \le L$ as in the definition above, then $\Gamma (G)$ has $1 + |K|$ connected components.  (Although neither of these computations are difficult, the second computation is done explicitly at the end of Section 3 of \cite{CoLe}.)  Also, Thompson's celebrated theorem shows that a Frobenius kernel is always nilpotent and it known that a Frobenius complement always has a nontrivial center.  Since the nonidentity elements of these subgroups form the connected components of $\Gamma (G)$ when $G$ is a Frobenius group, it follows that each connected component has diameter at most $2$.

When $G$ is a $2$-Frobenius group, there are $|K|$ connected components in $\Gamma (G)$ that each consist of the nonidentity elements of a Frobenius complement of $L$.  Since these Frobenius complements are all cyclic, the corresponding connected components in $\Gamma (G)$ are complete graphs.  The remaining nonidentity elements of $G$ form a single connected component in $\Gamma (G)$.  We know that every element of prime order in $G \setminus L$ centralizes some nonidentity element in $K$.  (See Lemma 3.8 of \cite{CoLe}.)  It follows that every element outside of $K$ has distance at most $2$ to a nonidentity element of $K$, and since $K$ is nilpotent, we deduce that this remaining connected component of $\Gamma (G)$ has diameter at most $6$.  (We would not be surprised if one could actually get a tighter bound for the diameter of this component.)

Take $P$ to be an extra special group of order $p^3$ and exponent $p$ for an odd prime $p$.  Let $q$ be a prime divisor of $p - 1$.  It is well-known that $P$ has an automorphism $\sigma$ of order $q$ that centralizes $\centerof{P}$.  Let $C = \langle \sigma \rangle$ so that $C$ acts on $P$ via automorphisms, and let $G$ be the resulting semi-direct product.  Observe that $\centerof{P} = Z$ and $G/Z$ is a Frobenius group of order $p^2q$.  Also, we have $Z = C (P) \subseteq C (G)$, so we do not have $C \cap Z = \{ 1 \}$.  We see that $\Gamma (G/Z)$ is disconnected and has $1 + p^2$ connected components.  It follows that $\Gamma (G)$ is disconnected.  It is not difficult to see that the $p^2$ connected components of $\Gamma (G/Z)$ that correspond to the Frobenius complements in $G/Z$ will correspond to $p^2$ connected components in $\Gamma (G)$.  

On the other hand, since $Z (P) = Z (G)$, we see that the remaining connected component of $\Gamma (G/Z)$ consists of the nontrivial cosets in $P/Z (P)$.  On the other hand, in the graph of $\Gamma (G)$, the elements in $P \setminus Z$ will have the same connected components as the graph $\Gamma (P)$.  It is not difficult to see that every noncentral element of $P$ has a centralizer that is abelian of order $p^2$.  This implies that $P$ is what is sometimes called a CA-group in the literature.  In particular, it is not difficult to see that the elements in $P \setminus Z$ split into $p + 1$ different connected components each having $p^2 - p$ elements.  In particular, $\Gamma (G)$ has $1 + p + p^2$ different connected components.  This shows that the correspondence in Lemma \ref{four} (3) does not hold when we do not have $C \cap Z = \{ 1 \}$.  

We next present ${\rm GL}_2 (3)$.  In this case, $G/Z \cong S_4$ is a $2$-Frobenius group so that $\Gamma (G/Z)$ has $4$ connected components that are complete graphs and one connected component that has diameter $3$.  On the other hand, $\Gamma (G)$ has $13$ connected components that are all complete graphs.  We include these graphs as Figures \ref{gl23} and \ref{gl23qc}.

\begin{figure}[h] 
	\centering
	
	\begin{minipage}{0.8\textwidth}
		\centering
		\includegraphics[width=0.5\linewidth]{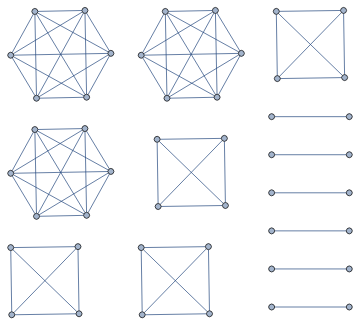}
		\caption{\texttt{$\Gamma (G)$ for $G = \rm{GL}_2 (3)$ }}
		\label{gl23}
	\end{minipage}
\end{figure}
\bigskip
\begin{figure}
	\begin{minipage}{0.8\textwidth}
		\centering
		\includegraphics[width=0.4\linewidth]{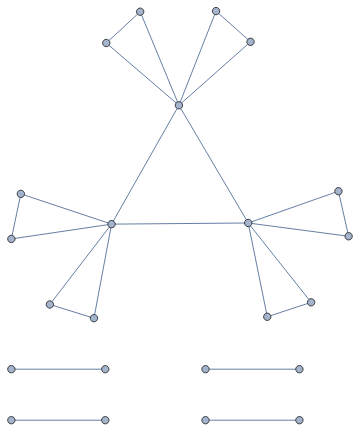}
		\caption{\texttt{$\Gamma (G/Z)$ for $G = \rm{GL}_2 (3)$ }}
		\label{gl23qc}
	\end{minipage}
\end{figure}

\eject
The last examples we present are examples where $\Gamma (G)$ and $\Gamma (G/Z)$ are both connected and the diameters are different.  The groups are SmallGroups (400,125), (400,126), and (400,127).  In all three cases, we have that $\Gamma (G)$ has diameter $5$ and $\Gamma (G/Z)$ has diameter $3$.

\end{document}